\newtheorem{theorem}{Theorem}[section]
\newtheorem{lemma}[theorem]{Lemma}
\theoremstyle{definition}
\newtheorem{remark}{Remark}
\newtheorem{example}{Example}
\newcommand{\N}{\mathbb N}
\newcommand{\R}{\mathbb R}
\newcommand{\essinf}{\mathop{\rm essinf\,}}
\newcommand{\RR}{\mathbb R}
\newcommand{\NN}{\mathbb N}
\newcommand{\Tr}{\texttt{\rmfamily{Tr}}}
\newcommand{\E}{\texttt{\rmfamily{E}}}
\def\div{\mathop{\rm div}}
\title[Nonlinear equations involving...] 
      {Nonlinear equations involving the
       square root of the Laplacian}
\author[V. Ambrosio, G. Molica Bisci and D. D. Repov\v{s}]{}
\subjclass[2010]{Primary: 49J35, 35A15, 35S15; Secondary: 47G20, 45G05.}
 \keywords{Fractional Laplacian, variational methods, multiple solutions.}
 \email{\scshape vincenzo.ambrosio@uniurb.it}
 \email{\scshape gmolica@unirc.it}
 \email{\scshape dusan.repovs@guest.arnes.si}
\begin{document}

\maketitle

\centerline{\scshape Vincenzo Ambrosio}

\medskip

{\small
 \centerline{Dipartimento di Scienze Pure e Applicate (DiSPeA)}
   \centerline{Universit\`{a} degli Studi di Urbino `Carlo Bo'
   Piazza della Repubblica, 13}
   \centerline{ 61029 Urbino, Pesaro e Urbino, Italy}
   }
   
\medskip

\centerline{\scshape Giovanni Molica Bisci}

\medskip

{\small
 \centerline{Dipartimento PAU}
   \centerline{Universit\`a  degli Studi `Mediterranea' di Reggio Calabria}
   \centerline{Salita Melissari - Feo di Vito}
   \centerline{89100 Reggio Calabria, Italy}
}

\medskip

\centerline{\scshape  Du\v{s}an D. Repov\v{s}}
\medskip
{\small
 \centerline{ Faculty of Education, and Faculty of Mathematics and Physics}
   \centerline{University of Ljubljana}
   \centerline{SI-1000 Ljubljana, Slovenia}
}

\bigskip

\centerline{\it Dedicated to Professor Vicen\c{t}iu R\u{a}dulescu with deep esteem and admiration}

\begin{abstract}
In this paper we discuss the existence and non-existence of weak solutions to parametric fractional equations involving the square root of the Laplacian $A_{1/2}$ in a smooth bounded domain $\Omega\subset \R^{n}$ ($n\geq 2$) and with zero Dirichlet boundary conditions. Namely, our simple model is the following equation
\begin{equation*}
\left\{
\begin{array}{ll}
A_{1/2}u=\lambda f(u) & \mbox{ in } \Omega\\
u=0 & \mbox{ on } \partial\Omega.
\end{array}\right.
\end{equation*}
\noindent The existence of at least two non-trivial $L^{\infty}$-bounded weak solutions is established for large value of the parameter $\lambda$, requiring that the nonlinear term $f$ is continuous, superlinear at zero and sublinear at infinity. Our approach is based on variational arguments and a suitable variant of the Caffarelli-Silvestre extension method.
\end{abstract}
\section{Introduction}\label{sec:introduzione}

This paper is concerned with the existence of solutions to nonlinear
problems involving a non-local positive operator: the square root
of the Laplacian in a bounded domain with zero Dirichlet boundary
conditions.\par
 More precisely, from the variational viewpoint, we study
 the existence and non-existence of weak solutions to the following fractional problem
\begin{equation}\label{problema}
\left\{
\begin{array}{ll}
A_{1/2}u=\lambda \beta(x)f(u) & \mbox{ in } \Omega\\
u=0 & \mbox{ on } \partial\Omega,
\end{array}\right.
\end{equation}
where $\Omega$ is an open bounded subset of $\R^n$ ($n\geq 2$) with Lipschitz boundary $\partial \Omega$, $\lambda$ is a positive real parameter, and $\beta:\Omega\to\R$ is a function belonging to $L^{\infty}(\Omega)$ and satisfying
\begin{equation}\label{proprietabeta}
\essinf_{x\in\Omega}\beta(x) > 0.
\end{equation}

 \indent Moreover, the fractional non-local operator $A_{1/2}$ that appears in \eqref{problema} is defined by using the approach developed in the pioneering works of Caffarelli \& Silvestre \cite{CSCPDE}, Caffarelli \& Vasseur \cite{CafVas}, and Cabr\'{e} \& Tan \cite{cabretan}, to which we refer in Section 2 for
the precise mathematical description and properties. We also notice that $A_{1/2}$ which we consider, should not be
confused with the integro-differential operator defined, up to a constant, as
$$(-\Delta)^{1/2} u(x):=
-\int_{\mathbb{R}^{n}}\frac{u(x+y)+u(x-y)-2u(x)}{|y|^{n+1}}\,dy, \quad\, \forall\, x\in \RR^n.$$
 In fact, Servadei \& Valdinoci in \cite{svd} showed that these two operators, although often denoted in the same
way, are really different, with eigenvalues and eigenfunctions behaving
differently (see also Musina \& Nazarov \cite{MuNa}).\par
As pointed out in \cite{cabretan}, the fractions of the Laplacian, such as the previous square root of the Laplacian $A_{1/2}$,
are the infinitesimal generators of L\'{e}vy stable diffusion processes and
appear in anomalous diffusions in plasmas, flames propagation and
chemical reactions in liquids, population dynamics, geophysical fluid
dynamics, and American options in finance. Moreover, a lot of interest has been devoted to elliptic equations involving the fractions of the Laplacian, (see, among others, the papers \cite{A2, Am,Am1, colorado1, colorado2, capella, MaMu, MRadu2, MuPa, tan} as well as \cite{cit1, Cit2,Cit3, Cit4bis, Cit4bisbis, Cit4,Cit5} and the references therein). See also the papers \cite{Ap,pp} for related topics.\par
In our context, regarding the nonlinear term, we assume that $f:\R\to\R$ is continuous, \textit{superlinear at zero}, i.e.
\begin{equation}\label{supf}
\lim_{t\to 0}\frac{f(t)}{t}=0,
\end{equation}
\textit{sublinear at infinity}, i.e.
\begin{equation}\label{condinfinito}
\lim_{|t|\to \infty}\frac{f(t)}{t}=0,
\end{equation}
and such that
\begin{equation}\label{Segno}
\sup_{t\in \R}F(t)>0,
\end{equation}
where
$$
F(t):=\int_0^t f(z)dz,
$$
for any $t\in\R$. Assumptions \eqref{supf} and \eqref{condinfinito} are quite standard in the presence of subcritical terms. Moreover, together with \eqref{Segno}, they guarantee that the number
\begin{equation}\label{c6}
c_f:=\max_{|t|> 0}\frac{|f(t)|}{|t|}
\end{equation}
is well-defined and strictly positive. Furthermore, property \eqref{supf} is a sublinear growth condition at infinity on the nonlinearity $f$ which complements the classical Ambrosetti and Rabinowitz assumption.\par
Here, and in the sequel, we denote by $\lambda_1$ the first eigenvalue of the operator $-\Delta$ in $\Omega$ with homogeneous Dirichlet boundary data, namely the first (simple and positive) eigenvalue of the linear problem
$$
\left\{
\begin{array}{ll}
-\Delta u = \lambda u  & \mbox{ in } \Omega\\
u=0 & \mbox{ on } \partial\Omega.
\end{array}
\right.
$$
\indent The main result of the present paper is an existence theorem for equations driven by the square root of the Laplacian, as stated below.

\begin{theorem}\label{generalkernel0f}
Let $\Omega$ be an open bounded set of $\R^n$ $\mathopen{(}n\geq 2\mathclose{)}$ with Lipschitz boundary $\partial\Omega$, $\beta:\Omega\to\R$ a function satisfying \eqref{proprietabeta}, and $f:\R\to\R$ a continuous function satisfying  \eqref{supf}--\eqref{Segno}. Then the following assertions hold:
\begin{itemize}
\item[$(i)$] problem \eqref{problema} admits only the trivial solution whenever $$0\leq \lambda <\frac{\lambda_1^{1/2}}{c_f\left\|\beta\right\|_{L^{\infty}(\Omega)}} ;$$
\item[$(ii)$] there exists $\lambda^\star>0$ such that \eqref{problema} admits at least two distinct and non-trivial weak solutions $u_{1,\lambda}, u_{2,\lambda}\in L^{\infty}(\Omega) \cap H_0^{1/2}(\Omega)$,
    provided that $\lambda>\lambda^\star$.
\end{itemize}
\end{theorem}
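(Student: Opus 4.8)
The plan is to recast \eqref{problema} variationally through the Caffarelli--Silvestre / Cabr\'e--Tan extension described in Section~2. Writing $\mathcal{C}:=\Omega\times(0,+\infty)$ and letting $w$ denote the harmonic extension of $u$ to $\mathcal{C}$ that vanishes on the lateral boundary $\partial\Omega\times(0,+\infty)$, one has $A_{1/2}u=-\partial_y w(x,0)$ (in the normalisation of Section~2), so that weak solutions of \eqref{problema} correspond exactly to critical points of
\[
J_\lambda(w):=\frac12\int_{\mathcal{C}}|\nabla w|^2\,dx\,dy-\lambda\int_\Omega\beta(x)\,F\big(w(x,0)\big)\,dx
\]
on the Hilbert space $X$ of such extensions, normed by $\|w\|^2:=\int_{\mathcal{C}}|\nabla w|^2$. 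Throughout I would use three facts: the trace operator $X\to L^2(\Omega)$, $w\mapsto w(\cdot,0)=u$, is compact (through $H_0^{1/2}(\Omega)\hookrightarrow\hookrightarrow L^2(\Omega)$); the variational characterisation of the first eigenvalue of $A_{1/2}$ yields the Poincar\'e-type bound $\|w\|^2\geq\lambda_1^{1/2}\|u\|_{L^2(\Omega)}^2$; and \eqref{c6} gives the global linear bound $|f(t)|\leq c_f|t|$, whence $|F(t)|\leq\tfrac{c_f}{2}t^2$.

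For $(i)$, if $u$ is a weak solution with extension $w$, then testing the extended equation with $w$ gives $\|w\|^2=\lambda\int_\Omega\beta f(u)u$. Using $|f(u)u|\leq c_f u^2$, the bound on $\beta$, and the Poincar\'e inequality above, I obtain $\|w\|^2\leq \lambda c_f\|\beta\|_{L^{\infty}(\Omega)}\lambda_1^{-1/2}\|w\|^2$. When $\lambda<\lambda_1^{1/2}/(c_f\|\beta\|_{L^{\infty}(\Omega)})$ the factor is strictly less than $1$, forcing $w\equiv0$ and hence $u\equiv0$.

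For $(ii)$ I would produce the two solutions by a minimisation plus mountain-pass scheme. Sublinearity at infinity \eqref{condinfinito} gives, for every $\varepsilon>0$, a constant $C_\varepsilon$ with $|F(t)|\leq\varepsilon t^2+C_\varepsilon$; choosing $\varepsilon$ small and invoking Poincar\'e shows $J_\lambda$ is coercive, while compactness of the trace makes $w\mapsto\int_\Omega\beta F(u)$ weakly continuous, so $J_\lambda$ is weakly lower semicontinuous and attains a global minimiser $w_{1}$. Because $\sup_t F(t)>0$ and $\essinf_{x\in\Omega}\beta(x)>0$, one can fix $u_0\in H_0^{1/2}(\Omega)$ (a smooth plateau at a value $t_0$ with $F(t_0)>0$, cut off near $\partial\Omega$) with $\int_\Omega\beta F(u_0)>0$; then, writing $w_0$ for its extension, $J_\lambda(w_0)\to-\infty$ as $\lambda\to+\infty$, so there is $\lambda^{\star}>0$ such that $\inf_X J_\lambda<0=J_\lambda(0)$ for $\lambda>\lambda^{\star}$, giving a first nontrivial solution $w_1\neq 0$.

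To get the second solution I would exploit superlinearity at zero \eqref{supf}: combining $|F(t)|\leq\varepsilon t^2$ for small $|t|$ with the quadratic bound for large $|t|$ yields $|F(t)|\leq\varepsilon t^2+C|t|^p$ for some $p\in(2,2n/(n-1))$, and the subcritical embedding then gives $J_\lambda(w)\geq\alpha>0$ on a small sphere $\|w\|=\rho$; thus $0$ is a strict local minimiser separated from $w_1$ (necessarily $\|w_1\|>\rho$ since $J_\lambda(w_1)<0$). Coercivity makes every Palais--Smale sequence bounded, and writing $J_\lambda'=\mathrm{id}-\lambda\,(\text{compact})$ via the compact trace yields the Palais--Smale condition; the mountain-pass theorem then produces a critical point $w_2$ at a level $c\geq\alpha>0$, hence $w_2\neq0$ and $w_2\neq w_1$. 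Setting $u_{i,\lambda}:=w_i(\cdot,0)$ gives two distinct nontrivial weak solutions, and the required $L^{\infty}$ bound follows from a Moser--Stampacchia iteration on the extension, which closes because \eqref{c6} forces the right-hand side to grow at most linearly. I expect the main obstacle to be this interplay of the two growth conditions: \eqref{supf} is exactly what creates the positive barrier making $0$ a strict local minimum, while \eqref{condinfinito} is what guarantees coercivity and the Palais--Smale condition, and one must verify that the negative-energy competitor forced by \eqref{Segno} indeed lies beyond the barrier, so that the mountain-pass level is genuinely positive.
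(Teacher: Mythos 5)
Your proposal is correct and follows essentially the same route as the paper: part $(i)$ by testing the extended equation with the solution itself and combining $|f(t)t|\le c_f t^2$ with the variational characterisation of $\lambda_1^{1/2}$ and the trace inequality, and part $(ii)$ by global minimisation plus the Mountain Pass Theorem for the extended functional, with coercivity and $(\rm PS)$ coming from sublinearity at infinity, the small-sphere barrier from superlinearity at zero, and the negative-energy competitor from a plateau function permitted by \eqref{Segno}. The only differences are cosmetic: the paper defines $\lambda^\star$ as the infimum of $\Phi(w)/\Psi(w)$ over $\{\Psi(w)>0\}$ (which yields the localisation of Remark \ref{bound}) rather than via a single competitor, and it obtains the $L^{\infty}$ bound by citing \cite[Theorem 5.2]{cabretan} instead of redoing the Moser--Stampacchia iteration.
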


Furthermore, in the sequel we will give additional information about the localization
of the parameter $\lambda^{\star}$. More precisely, by using the notations clarified later on in the paper, we show that
$$
\lambda^{\star}\in \left[\frac{\lambda_1^{1/2}}{c_f\left\|\beta\right\|_{L^{\infty}(\Omega)}},\lambda_{0}\right],
$$
see Remark \ref{bound} for details.\par
Theorem \ref{generalkernel0f} will be proved by applying classical variational techniques to the fractional framework.
More precisely, following \cite{cabretan},  we transform
problem \eqref{problema} to a local problem in one more dimension by using the notion of harmonic extension and the Dirichlet to Neumann map on $\Omega$ (see Section \ref{sec:preliminaries}). By studying
this extended problem with the classical minimization techniques in addition to the Mountain Pass Theorem, we are able to prove the existence of
at least two weak solutions whenever the parameter $\lambda$ is sufficiently large (for instance when $\lambda>\lambda_0$). Finally, the boundedness of the solutions immediately follows from \cite[Theorem 5.2]{cabretan}.\par

We emphasize that Cabr\'e \& Tan in \cite{cabretan} and Tan in \cite{tan3} studied the existence and non-existence of positive solutions for problem \eqref{problema} with power-type nonlinearities, the regularity and an $L^{\infty}$-estimate of weak solutions,
a symmetry result of the Gidas-Ni-Nirenberg type, and a priori estimates
of the Gidas-Spruck type.\par
Along this direction, we look here at the existence of positive $L^{\infty}$-bounded weak solutions on Euclidean balls in presence of sublinear term at infinity. To this end, for every $n\geq 2$ and $r>0$, set
$$
\zeta(n,r):=
\frac{8r^2}{\displaystyle r^2+\displaystyle 4\min_{\sigma\in \Sigma_n}z_n(\sigma)},\quad\quad {\rm with}\quad\quad\Sigma_n:=\left(\frac{1}{2^{1/n}},1\right)
$$
where
$$
z_n(\sigma):=\frac{1-\sigma^n}{(2\sigma^n-1)(1-\sigma)^2},\quad\forall\sigma\in \Sigma_n.
$$

\indent With the above notations, a special case of Theorem \ref{generalkernel0f} reads as follows.

\begin{theorem}\label{boccia}
Let $r>0$ and denote
$$
\Gamma^0_r:=\{(x,0)\in \partial \R^{n+1}_+:|x|<r\},
$$
where $\partial \R^{n+1}_+:=\R^n\times (0,+\infty)$ and $n\geq 2$. Moreover, let $f:[0,+\infty)\rightarrow \R$ be a continuous non-negative and non-identically zero function such that
$$
\lim_{t\to 0^+}\frac{f(t)}{t}=\lim_{t\to +\infty}\frac{f(t)}{t}=0,
$$
\noindent with
\begin{equation}\label{disball}
\min_{t\in S}\frac{t^2}{F(t)}<\zeta(n,r),
\end{equation}
where
$$
S:=\{t>0:F(t)>0\}.
$$
\indent Then the following nonlocal problem
\begin{equation}\label{problema24}
\left\{
\begin{array}{ll}
A_{1/2}u= f(u) & \mbox{\rm in } \Gamma^0_r\\
u> 0 & \mbox{\rm on } \Gamma^0_r\\
u=0 & \mbox{\rm on } \partial\Gamma^0_r
\end{array}\right.
\end{equation}
admits at least two distinct weak solutions $u_{1,\lambda}, u_{2,\lambda}\in L^{\infty}(\Gamma^0_r) \cap H_0^{1/2}(\Gamma^0_r)$.
\end{theorem}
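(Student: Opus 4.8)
The plan is to deduce Theorem \ref{boccia} from the general result, Theorem \ref{generalkernel0f}, by specializing to the ball and then checking, through an explicit test function, that the abstract existence threshold falls below the value $\lambda=1$ imposed in \eqref{problema24}. Identify $\Gamma^0_r$ with the open ball $B_r:=\{x\in\R^n:|x|<r\}$, on which \eqref{problema24} is precisely problem \eqref{problema} with $\lambda=1$ and $\beta\equiv1$ (so $\|\beta\|_{L^\infty(\Omega)}=1$ and \eqref{proprietabeta} holds trivially). First I would extend the datum $f:[0,+\infty)\to\R$ to a continuous $\tilde f:\R\to\R$ by setting $\tilde f\equiv0$ on $(-\infty,0)$; since $f$ is continuous with $f(0)=0$ (forced by $\lim_{t\to0^+}f(t)/t=0$), the extension $\tilde f$ is continuous and satisfies \eqref{supf}--\eqref{condinfinito}, while \eqref{Segno} holds because $S\neq\emptyset$ (implicit in \eqref{disball} and in $f$ being non-negative and non-identically zero). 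Thus Theorem \ref{generalkernel0f} applies to $\tilde f$.

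Because $\tilde f\ge0$, every non-trivial weak solution produced is in fact strictly positive, which matches the sign requirement in \eqref{problema24}. Indeed, testing the extended weak formulation with the negative part $U^-$ of the harmonic extension $U$ of $u$ gives $\int_{\R^{n+1}_+}|\nabla U^-|^2\,dx\,dy=\int_{B_r}\tilde f(u)\,u^-\,dx=0$, since the integrand vanishes where $u<0$; hence $U^-\equiv0$ and $u\ge0$, and the strong maximum principle of \cite{cabretan} upgrades any non-trivial such $u$ to $u>0$ in $\Gamma^0_r$. Consequently it suffices to place ourselves in the regime of assertion $(ii)$ of Theorem \ref{generalkernel0f}: since two solutions exist whenever $\lambda>\lambda_0$, and $\lambda^\star\le\lambda_0$ by Remark \ref{bound}, it is enough to prove that $\lambda_0<1$, so that the value $\lambda=1$ lies in the existence range $\lambda>\lambda^\star$.

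The remaining and main step is therefore the estimate $\lambda_0<1$, which I would extract from \eqref{disball} by inserting an explicit radial test function into the energy. For $\sigma\in\Sigma_n=(2^{-1/n},1)$ and $t>0$ consider the ``plateau'' function $u_{\sigma,t}:=t\,w_\sigma$, where $w_\sigma\equiv1$ on $B_{\sigma r}$, $w_\sigma(x)=\frac{r-|x|}{r(1-\sigma)}$ on the annulus $B_r\setminus B_{\sigma r}$, and $w_\sigma\equiv0$ elsewhere. Since $F$ is non-decreasing with $F(0)=0$ (as $f\ge0$), one has $\int_{B_r}F(u_{\sigma,t})\,dx\ge F(t)\,|B_{\sigma r}|$, while $\|u_{\sigma,t}\|^2=t^2\|w_\sigma\|^2$ is evaluated through the Caffarelli--Silvestre extension. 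Any competitor for which $\tfrac12\|u_{\sigma,t}\|^2<\int_{B_r}F(u_{\sigma,t})\,dx$ yields $\lambda_0\le\frac{\|u_{\sigma,t}\|^2}{2\int_{B_r}F(u_{\sigma,t})\,dx}<1$. After computing $\|w_\sigma\|^2$ explicitly and using the lower bound on the potential, this inequality becomes one of the form $\frac{t^2}{F(t)}<(\text{explicit function of }n,r,\sigma)$; minimizing the right-hand side over the admissible range $\sigma\in\Sigma_n$ (where $2\sigma^n-1>0$, which is exactly what keeps $z_n(\sigma)$ finite and positive) produces the profile $z_n(\sigma)$ and the constant $\zeta(n,r)$. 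Hence \eqref{disball}, together with a $t\in S$ attaining $\min_{t\in S}\frac{t^2}{F(t)}$, furnishes a pair $(\sigma,t)$ realizing $\tfrac12\|u_{\sigma,t}\|^2<\int_{B_r}F(u_{\sigma,t})\,dx$, and therefore $\lambda_0<1$.

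I expect the computation of the extension energy $\|w_\sigma\|^2$ to be the crux. One must control the Dirichlet energy of the harmonic extension of the radial profile $w_\sigma$ into $\R^{n+1}_+$ (equivalently, exhibit a sharp enough explicit competitor extension), carry out the resulting radial integrals, and minimize the combination over $\sigma\in\Sigma_n$ so that it collapses to the closed form $z_n(\sigma)$ and to $\zeta(n,r)$; pinning down the precise constants $8$, $4$ and the range $\Sigma_n$ is where the delicacy lies. By contrast, the extension of $f$, the non-negativity and strict positivity of the solutions, and the purely logical reduction to $\lambda_0<1$ are routine once Theorem \ref{generalkernel0f} is available.
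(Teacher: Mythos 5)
Your strategy coincides with the paper's own: replace $f$ by its zero extension $f_+$, apply Theorem \ref{generalkernel0f} on $\Omega=\Gamma^0_r$ with $\beta\equiv1$ and $\lambda=1$, obtain positivity of the solutions from the strong maximum principle, and then use the truncated-cone competitor of Remark \ref{bound} to check that hypothesis \eqref{disball} places the value $\lambda=1$ above the existence threshold. The reduction and the positivity argument are fine. The genuine gap lies in the step you explicitly defer: for this theorem, the ``crux'' computation you postpone \emph{is} the proof, because the statement is an inequality against the specific constant $\zeta(n,r)$, and nothing in your text derives that constant. The missing device is the paper's explicit competitor: there is no need to control the harmonic extension of the profile $\omega_\sigma^t$ (take $x_0=0$, $\tau=r$ in Remark \ref{bound}); one simply takes $w_\sigma^t(x,y):=e^{-y/2}\omega_\sigma^t(x)\in X^{1/2}_0(\mathcal{C}_\Omega)$, notes that $|\nabla w_\sigma^t|^2=e^{-y}\bigl(|\nabla \omega_\sigma^t|^2+\tfrac{1}{4}|\omega_\sigma^t|^2\bigr)$, and integrates out $y$ to get
\[
\|w_\sigma^t\|^2_{X^{1/2}_0(\mathcal{C}_\Omega)}\le \int_{B_r}|\nabla\omega_\sigma^t(x)|^2\,dx+\frac{t^2}{4}\,|B_r|
= t^2\omega_n\left(\frac{r^{n-2}(1-\sigma^n)}{(1-\sigma)^2}+\frac{r^n}{4}\right);
\]
this is precisely where the two terms $4\min_{\sigma\in\Sigma_n}z_n(\sigma)$ and $r^2$ in $\zeta(n,r)=8r^2/\bigl(r^2+4\min_{\sigma\in\Sigma_n}z_n(\sigma)\bigr)$ come from.

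Moreover, your sketch of how the computation would ``collapse to'' $z_n(\sigma)$ and $\zeta(n,r)$ is inconsistent with your own estimates, so executing the plan as written would not close the proof. You bound the potential by $\int_{B_r}F(u_{\sigma,t})\,dx\ge F(t)\,|B_{\sigma r}|=F(t)\sigma^n\omega_n r^n$ (legitimate, since $f\ge0$ makes $F$ nondecreasing), but then the denominator of your Rayleigh-type quotient carries $\sigma^n$, every $\sigma\in(0,1)$ is admissible, and the profile to minimize is $\frac{1-\sigma^n}{\sigma^n(1-\sigma)^2}$ plus a tail $\frac{1}{4\sigma^n}$, not $z_n(\sigma)$ on $\Sigma_n$. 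The factor $2\sigma^n-1$ defining $z_n$, and with it the restriction to $\Sigma_n$, arise from the paper's different potential bound $\int_\Omega F\ge\bigl(F(t_0)\sigma^n-(1-\sigma^n)\max_{|t|\le t_0}|F(t)|\bigr)\omega_n r^n=F(t_0)(2\sigma^n-1)\omega_n r^n$, i.e.\ \eqref{lambda1234}; your remark that $\Sigma_n$ is ``exactly what keeps $z_n(\sigma)$ finite and positive'' shows the computation was not actually carried out. Consequently your route ends with a constant different from $\zeta(n,r)$, and you would still owe a comparison showing that \eqref{disball} implies your criterion; that comparison is not automatic, since relative to $\zeta(n,r)$ your main term is smaller (because $\sigma^n>2\sigma^n-1$) while your tail term $\frac{1}{4\sigma^n}$ exceeds the $\frac{1}{4}$ appearing in $\zeta(n,r)$, so the two constants do not compare in an obvious direction. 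Pinning down exactly these tail terms is where the argument is treacherous, and it is the part the proposal leaves blank.
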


The structure of this paper is as follows. After presenting the functional space related to problem \eqref{problema} together with its basic properties (Section \ref{sec:preliminaries}), we show via direct computations that for a determined right neighborhood of $\lambda$, the zero solution is the unique one (Section \ref{sec:non-exist}). In Section \ref{sec:multiple} we prove the existence of two weak solutions for $\lambda$ bigger than a certain $\lambda^\star$: the first one is obtained via direct minimization, the second one via the Mountain Pass Theorem. Specific bounds for $\lambda^\star$ are obtained in Remark \ref{bound}.\par

We refer to the recent book \cite{MRS}, as well as \cite{valpal}, for the abstract variational setting used in the present paper. See the recent very nice papers \cite{M0,M1} of Kuusi, Mingione \& Sire on nonlocal fractional problems.
 
\section{Preliminaries}\label{sec:preliminaries}
In this section we briefly recall the definitions of the functional space setting, first introduced in \cite{cabretan}.
The reader familiar with this topic may skip this section and go directly to the next one.
\subsection{Fractional Sobolev spaces}
The power $A_{1/2}$ of the Laplace operator $-\Delta$ in a bounded domain $\Omega$ with zero boundary conditions is defined through the spectral decomposition using the powers of the eigenvalues of the original operator.\par
 Hence, according to classical results on positive operators in $\Omega$, if $\{\varphi_j,\lambda_j\}_{j\in \NN}$ are the eigenfunctions and eigenvalues of the usual linear Dirichlet problem
\begin{equation}\label{problema autovalori}
\left\{\begin{array}{ll}
-\Delta u=\lambda u & \mbox{in } \Omega\\
u=0 & \mbox{on } \partial\Omega,
\end{array}
\right.
\end{equation}
then $\{\varphi_j,\lambda_j^{1/2}\}_{j\in\NN}$ are the eigenfunctions and eigenvalues of the corresponding fractional one:
\begin{equation}\label{problema autovalori2}
\left\{\begin{array}{ll}
A_{1/2} u=\lambda u & \mbox{in } \Omega\\
u=0 & \mbox{on } \partial\Omega.
\end{array}
\right.
\end{equation}

\indent We
repeat each eigenvalue of $-\Delta$ in $\Omega$
 with zero Dirichlet boundary conditions
according to its (finite) multiplicity:
$$0<\lambda_{1}<\lambda_{2}\le \dots \le \lambda_{j}\le \lambda_{j+1}\le \dots$$
and
$\lambda_j\rightarrow +\infty$ as $j\rightarrow +\infty$. Moreover, we can suppose that the eigenfunctions $\{\varphi_j\}_{j\in \N}$  are normalized as follows:
$$
\int_\Omega |\nabla\varphi_j(x)|^2dx=\lambda_j\int_\Omega |\varphi_j(x)|^2dx=\lambda_j,\quad\forall\, j\in \N
$$
and
$$
\int_\Omega \nabla\varphi_i(x)\cdot\nabla\varphi_j(x)dx=\int_\Omega \varphi_i(x)\varphi_j(x)dx=0,\quad\forall\, i\neq j.
$$
Finally, standard regularity arguments ensure that $\varphi_j\in C^{2}(\bar \Omega)$, for every $j\in \N$.

\indent The operator $A_{1/2}$ is well-defined on the Sobolev space
$$
H_0^{1/2}(\Omega):=\left\{u\in L^2(\Omega): u=\displaystyle\sum_{j=1}^{\infty}a_j\varphi_j\,\,{\rm and}\,\,\displaystyle\sum_{j=1}^{\infty} a_j^2\lambda_j^{1/2}<+\infty\right\},
$$
endowed by the norm
$$
\|u\|_{H_0^{1/2}(\Omega)}:=\displaystyle\left(\sum_{j=1}^{\infty} a_j^2\lambda_j^{1/2}\right)^{1/2},
$$
and has the following form
$$
A_{1/2}u=\sum_{j=1}^{\infty}a_j\lambda_j^{1/2}\varphi_j, \, \mbox{ where } a_{j}:=\int_{\Omega} u(x) \varphi_{j}(x) dx.
$$
\subsection{The extension problem}\label{subsec:b}

Associated to the bounded domain $\Omega$, let us consider the cylinder
$$
\mathcal{C}_\Omega:=\{(x,y): x\in \Omega,\,\, y>0\}\subset \RR_+^{n+1},
$$
and denote by $\partial_L \mathcal{C}_\Omega:=\partial\Omega\times [0,+\infty)$ its lateral boundary.\par
For a function $u\in H_0^{1/2}(\Omega)$, define the harmonic extension $\E(u)$ to the cylinder $\mathcal{C}_\Omega$ as the solution of the problem
\begin{align}\label{1.1}
 \left\{
\begin{array}{ll}
\displaystyle{\div(\nabla \E(u))=  0 } \quad &\mbox{\rm in } \mathcal{C}_\Omega\\
\displaystyle{\E(u)=0} \quad &\mbox{\rm on } \partial_L\mathcal{C}_\Omega\\
\Tr(\E(u))=u \quad &\mbox{\rm on } \Omega,
\end{array}\right.
\end{align}
where
$$
\Tr(\E(u))(x):=\E(u)(x,0),\quad\, \forall\, x\in \Omega.
$$
\indent The extension function $\E(u)$ belongs to the Hilbert space
$$
X^{1/2}_0(\mathcal{C}_\Omega):=\left\{w\in L^2(\mathcal{C}_\Omega):w=0\,\,{\rm on }\,\, \partial_L \mathcal{C}_\Omega,\,\, \int_{\mathcal{C}_\Omega}|\nabla w(x,y)|^2\,dxdy<+\infty\right\},
$$
with the standard norm
$$
\|w\|_{X^{1/2}_0(\mathcal{C}_\Omega)}:=\left(\int_{\mathcal{C}_\Omega}|\nabla w(x,y)|^2\,dxdy\right)^{1/2}.
$$
Hence the space $X^{1/2}_0(\mathcal{C}_\Omega)$ is defined by
$$
X^{1/2}_0(\mathcal{C}_\Omega)=\left\{w\in H^1(\mathcal{C}_\Omega): w=0\,\,{\rm on }\,\, \partial_L \mathcal{C}_\Omega\right\},
$$
and can be characterized as follows
$$
X^{1/2}_0(\mathcal{C}_\Omega)=\left\{w\in L^2(\mathcal{C}_\Omega):w=\displaystyle\sum_{j=1}^{\infty}b_j\varphi_je^{-\lambda_j^{1/2}y}\,\,{\rm with}\,\,\displaystyle\sum_{j=1}^{\infty} b_j^2\lambda_j^{1/2}<+\infty\right\},
$$
see \cite[Lemma 2.10]{cabretan}.\par
\indent In our framework, a crucial role between the spaces $X^{1/2}_0(\mathcal{C}_\Omega)$ and $H_0^{1/2}(\Omega)$ is played by trace operator $\Tr: X^{1/2}_0(\mathcal{C}_\Omega)\rightarrow H_0^{1/2}(\Omega)$ given by
$$
\Tr(w)(x):=w(x,0),\,\quad \forall\, x\in \Omega.
$$
\indent The trace operator is a continuous map (see \cite[Lemma 2.6]{cabretan}), and gives a lot of information,
which we recall in the sequel. We also notice that
$$
H^{1/2}_0(\Omega)=\{u\in L^2(\Omega):u=\Tr(w),\,{\rm for\, some}\,w\in X^{1/2}_0(\mathcal{C}_\Omega)\}\subset H^{1/2}(\Omega),
$$
and that the extension operator $\E:H_0^{1/2}(\Omega)\rightarrow X^{1/2}_0(\mathcal{C}_\Omega)$ is an isometry i.e.
$$
\|\E(u)\|_{X^{1/2}_0(\mathcal{C}_\Omega)}=\|u\|_{H_0^{1/2}(\Omega)},
$$
for every $u\in H_0^{1/2}(\Omega)$. Here, $H^{1/2}(\Omega)$ denotes the Sobolev space of order $1/2$, defined as
$$
H^{1/2}(\Omega):=\left\{u\in L^2(\Omega):\int_{\Omega\times\Omega}\frac{|u(x)-u(y)|^2}{|x-y|^{n+1}}dxdy<+\infty\right\},
$$
with the norm
$$
\|u\|_{H^{1/2}(\Omega)}:=\left(\int_{\Omega\times\Omega}\frac{|u(x)-u(y)|^2}{|x-y|^{n+1}}dxdy+\int_\Omega|u(x)|^2dx\right)^{1/2}.
$$
Next, we have the following trace inequality
\begin{equation}\label{traceine}
\|\Tr(w)\|_{H^{1/2}_0(\Omega)}\leq \|w\|_{X^{1/2}_0(\mathcal{C}_\Omega)},
\end{equation}
for every $w\in X^{1/2}_0(\mathcal{C}_\Omega)$. Before concluding this subsection, we recall the embedding
properties of~$\Tr(X^{1/2}_0(\Omega))$ into the usual Lebesgue spaces; see
\cite[Lemmas 2.4 and 2.5]{cabretan}.\par
 More precisely, the embedding $j:\Tr(X_0^{1/2}(\mathcal{C}_\Omega))\hookrightarrow
L^{\nu}(\Omega)$ is continuous for any $\nu\in [1,2^{\sharp}]$, and is
compact whenever $\nu\in [1,2^{\sharp})$, where $2^{\sharp}:=2n/(n-1)$ denotes the \textit{fractional critical Sobolev exponent}.\par
Thus, if $\nu\in [1,2^{\sharp}]$, then there exists a positive constant $c_\nu$ (depending on $\nu$, $n$ and the Lebesgue measure of $\Omega$, denoted by $|\Omega|$) such that
\begin{equation}\label{constants}
\left(\displaystyle\int_{\Omega}|\Tr(w)(x)|^{\nu}dx\right)^{1/\nu}\leq c_\nu \left(\displaystyle\int_{\mathcal{C}_\Omega}|\nabla w(x,y)|^2\,dxdy\right)^{1/2},
\end{equation}
for every $w\in X_0^{1/2}(\mathcal{C}_\Omega)$. From now on, for every $q\in [1,\infty]$, $\|\cdot\|_{L^{q}(\Omega)}$ denotes the usual norm of the Lebesgue space $L^q(\Omega)$.\par
As already said, we will consider the square root of the Laplacian,
defined according to the following procedure (see, for instance, the papers \cite{colorado1, colorado2, cabretan}).
By using the extension $\E(u)\in X^{1/2}_0(\mathcal{C}_\Omega)$ of the function $u\in H_0^{1/2}(\Omega)$, we can define the fractional operator $A_{1/2}$ in $\Omega$, acting on $u$, as follows$:$
$$
A_{1/2}u(x):=-\lim_{y\rightarrow 0^+}\frac{\partial \E(u)}{\partial y}(x,y),\quad\,\forall\, x\in \Omega
$$
i.e.
$$
A_{1/2}u(x)=\displaystyle\frac{\partial \E(u)}{\partial \nu}(x),\quad\,\forall\, x\in \Omega
$$
where $\nu$ is the unit outer normal to $\mathcal{C}_{\Omega}$ at $\Omega\times\{0\}$.\par
\subsection{Weak solutions}
\indent Assume that $f:\R\rightarrow\R$ is a subcritical function and $\lambda>0$ is fixed. We say that a function $u=\Tr(w)\in H^{1/2}_0(\Omega)$ is a \textit{weak solution} of the problem \eqref{problema} if $w\in X^{1/2}_0(\mathcal{C}_\Omega)$ weakly solves
\begin{equation}\label{2}
\left\{\begin{array}{ll} \displaystyle{ -\div(\nabla w)=  0 } &\textrm{ in } \mathcal{C}_\Omega\\
\,\,\,\,\,\,\,\,\,\,\,\,\,\,\,\,\,\displaystyle{w=0} &\textrm{ on } \partial_L\mathcal{C}_\Omega\\
\displaystyle\frac{\partial w}{\partial \nu}=\lambda    \beta(x)f(\Tr(w)) & \text{ on } \Omega,
\end{array}\right.
\end{equation}
i.e.
\begin{equation}\label{EL}
\int_{\mathcal{C}_\Omega}\langle \nabla w,\nabla \varphi \rangle dxdy=\lambda\int_{\Omega}\beta(x)f(\Tr (w)(x))\Tr(\varphi)(x)dx,
\end{equation}
for every $\varphi\in X^{1/2}_0(\mathcal{C}_\Omega)$.\par
As direct computations prove, equation \eqref{EL} represents the variational formulation of \eqref{2} and the energy functional $\mathcal J_{\lambda}:X^{1/2}_0(\mathcal{C}_\Omega)\to \R$ associated with \eqref{EL} is defined by
\begin{equation}\label{JKlambda}
\begin{aligned}
\mathcal J_{\lambda}(w)& :=\frac{1}{2}\int_{\mathcal{C}_\Omega}|\nabla w(x,y)|^2\,dxdy\\
& \qquad \qquad \qquad \qquad
-\lambda\int_{\Omega}\beta(x)F(\Tr (w)(x))dx,
\end{aligned}
\end{equation}
for every $w\in X^{1/2}_0(\mathcal{C}_\Omega)$.\par
Indeed, as it can be easily seen, under our assumptions on the nonlinear term, the functional
$\mathcal J_{\lambda}$ is well-defined and of class $C^1$ in $X^{1/2}_0(\mathcal{C}_\Omega)$. Moreover, its critical points
are exactly the weak solutions of the problem \eqref{2}.\par
 Thus the traces of critical points of $\mathcal J_{\lambda}$ are the weak solutions to
problem~\eqref{problema}. According to the above remarks,
we will use critical point methods  in order to prove Theorems \ref{generalkernel0f} and \ref{boccia}.
\section{The main theorem: Non-existence for small $\lambda$}\label{sec:non-exist}

Let us prove assertion $(i)$ of Theorem \ref{generalkernel0f}.

Arguing by contradiction, suppose that there exists a weak solution $w_0\in\break X^{1/2}_0(\mathcal{C}_\Omega)\setminus\{0\}$ to problem \eqref{problema}, i.e.
\begin{equation}\label{problemaK0f45}
\int_{\mathcal{C}_\Omega}\langle \nabla w_0,\nabla \varphi \rangle dxdy=\lambda\int_{\Omega}\beta(x)f(\Tr (w_0)(x))\Tr(\varphi)(x)dx,
\end{equation}
for every $\varphi\in X^{1/2}_0(\mathcal{C}_\Omega)$.\par
Testing \eqref{problemaK0f45} with $\varphi:=w_0$, we have
\begin{equation}\label{au34}
\|w_0\|^2_{X^{1/2}_0(\mathcal{C}_\Omega)} =\lambda\int_{\Omega}\beta(x)f(\Tr (w_0)(x))\Tr(w_0)(x)dx,
\end{equation}
and it follows that
\begin{equation}\label{au345}
\begin{split}
\int_{\Omega}\beta(x)f(\Tr (w_0)(x))\Tr(w_0)(x)dx & \leq  \int_{\Omega}\beta(x)\left|f(\Tr (w_0)(x))\Tr(w_0)(x)\right|dx\\
                                       & \leq  c_f\|\beta\|_{L^{\infty}(\Omega)} \|\Tr(w_0)\|_{L^2(\Omega)}^2\\
                                       & \leq  \frac{c_f}{\lambda_1^{1/2}}\|\beta\|_{L^{\infty}(\Omega)}\|w_0\|^2_{X^{1/2}_0(\mathcal{C}_\Omega)}.
\end{split}
\end{equation}
In the last inequality we have used the following fact
$$
\lambda_{1}^{1/2}=\min_{w\in X^{1/2}_0(\mathcal{C}_\Omega)\setminus\{0\}}\frac{\displaystyle\int_{\mathcal{C}_\Omega}|\nabla w(x,y)|^2\,dxdy}{\displaystyle\int_{\Omega}|\Tr(w)(x)|^{2}dx}\leq \frac{\displaystyle\int_{\mathcal{C}_\Omega}|\nabla w_0(x,y)|^2\,dxdy}{\displaystyle\int_{\Omega}|\Tr(w_0)(x)|^{2}dx},
$$
and the trace inequality \eqref{traceine}. By \eqref{au34}, \eqref{au345} and the assumption on $\lambda$ we get
\begin{equation*}
\|w_0\|^2_{X^{1/2}_0(\mathcal{C}_\Omega)}  \leq \lambda \frac{c_f}{\lambda_1^{1/2}}\|\beta\|_{L^{\infty}(\Omega)} \|w_0\|^2_{X^{1/2}_0(\mathcal{C}_\Omega)} < \|w_0\|^2_{X^{1/2}_0(\mathcal{C}_\Omega)} ,
\end{equation*}
clearly a contradiction.
 
\section{The main theorem: Multiplicity}\label{sec:multiple}
\subsection{The variational setting}
 The aim of this section is to prove that, under natural assumptions on the nonlinear term $f$, weak solutions to problem \eqref{problema} below do exist. Our approach to determine multiple solutions to \eqref{problema} consists of applying classical variational methods to the functional $\mathcal J_{\lambda}$. To this end, we write $\mathcal J_{\lambda}$ as
$$
\mathcal J_{\lambda}(w)= \Phi(w)-\lambda \Psi(w),
$$
where
$$
\Phi(w):=\frac 1 2 \left\|w\right\|^2_{X^{1/2}_0(\mathcal{C}_\Omega)},
$$
while
$$
\Psi(w):= \int_{\Omega} \beta(x)F(\Tr(w)(x)) dx,
$$
for every $w\in {X^{1/2}_0(\mathcal{C}_\Omega)}$. Clearly, the functional $\Phi$ and $\Psi$ are Fr\'echet differentiable.

Moreover, the functional $\mathcal J_{\lambda}$ is weakly lower semicontinuous on $X_0^{1/2}(\mathcal{C}_\Omega)$. Indeed, the application $$w\mapsto\int_{\Omega}\beta(x)F(\Tr(w)(x))dx$$ is
continuous in the weak topology of $X_0^{1/2}(\mathcal{C}_\Omega)$.\par
We prove this regularity result as follows. Let
$\{w_{j}\}_{j\in \NN}$ be a sequence in $X_0^{1/2}(\mathcal{C}_\Omega)$ such that
$w_{j}\rightharpoonup w_\infty$ weakly in $X_0^{1/2}(\mathcal{C}_\Omega)$. Then, by using Sobolev embedding results
and \cite[Theorem~IV.9]{brezis}, up to a
subsequence, $\{\Tr(w_{j})\}_{j\in \N}$ strongly converges to $\Tr(w_\infty)$ in $L^{\nu}(\Omega)$
and almost everywhere (a.e.) in $\Omega$ as $j\to +\infty$, and it is dominated by some
function $\kappa_\nu\in L^{\nu}(\Omega)$ i.e.
\begin{equation}\label{dominata}
|\Tr(w_{j})(x)|\leq \kappa_\nu(x)\quad \mbox{a.e.}\,\, x\in \Omega\,\,\,\, \mbox{for any}\, j\in \NN
\end{equation}
for any $\nu\in [1,2^{\sharp})$.\par
Due to \eqref{condinfinito}, there exists $c>0$ such that
\begin{equation}\label{crescita}
|f(t)|\le c(1+|t|),\,\,\,\,\,(\forall\,t\in \RR).
\end{equation}

It then follows by the continuity of $F$ and \eqref{crescita} that
$$F(\Tr(w_{j})(x)) \to F(\Tr(w_{\infty})(x))\,\,\, \mbox{a.e.}\,\, x\in \Omega$$
as $j\to +\infty$ and
$$\left|F(\Tr(w_{j})(x))\right|\leq c\left( |\Tr(w_{j})(x)|+\frac{1}{2}|\Tr(w_{j})(x)|^2\right)\leq c\left( \kappa_1(x)+\frac{1}{2} \kappa_2(x)^2\right)\in L^1(\Omega)$$
a.e. $x\in \Omega$ and for any $j\in \NN$.\par
 Hence, by applying the
Lebesgue Dominated Convergence Theorem in $L^1(\Omega)$, we have
that
$$\int_\Omega \beta(x)F(\Tr(w_{j})(x))\,dx \to \int_\Omega \beta(x)F(\Tr(w_{\infty})(x))\,dx$$
as $j\to +\infty$, that is
the map  $$w\mapsto\int_{\Omega}\beta(x)F(\Tr(w_{j})(x))dx$$
is continuous from $X_0^{1/2}(\mathcal{C}_\Omega)$ with the weak topology to $\RR$.\par
 On the other hand, the map
$$w\mapsto \displaystyle\int_{\mathcal{C}_\Omega}|\nabla w(x,y)|^2\,dxdy$$ is lower
semicontinuous in the weak topology of $X_0^{1/2}(\mathcal{C}_\Omega)$.\par
 Hence, the
functional $\mathcal J_{\lambda}$ is lower semicontinuous in the weak
topology of\break $X_0^{1/2}(\mathcal{C}_\Omega)$.\par
 
\subsection{Sub-quadraticity of the potential}
Let us prove that, under the hypotheses \eqref{supf} and \eqref{condinfinito}, one has
\begin{equation}\label{sottoquadracita}
\lim_{\left\|w\right\|_{X^{1/2}_0(\mathcal{C}_\Omega)}\to 0}\frac{\Psi(w)}{\left\|w\right\|^2_{X^{1/2}_0(\mathcal{C}_\Omega)}} =0\,\,\,\,\,\,{\rm and}\,\,\,\,\,\, \lim_{\left\|w\right\|_{X^{1/2}_0(\mathcal{C}_\Omega)}\to \infty}\frac{\Psi(w)}{\left\|w\right\|^2_{X^{1/2}_0(\mathcal{C}_\Omega)}}=0.
\end{equation}
\indent Fix $\varepsilon>0$. In view of \eqref{supf} and \eqref{condinfinito}, there exists $\delta_\varepsilon\in \mathopen{(}0,1\mathclose{)}$ such that
\begin{equation}\label{stimaf}
|f(t)| \leq \frac{\varepsilon}{\left\|\beta\right\|_{L^{\infty}(\Omega)}}|t|,
\end{equation}
for all $0<|t|\leq \delta_\varepsilon$ and $|t|\geq \delta_\varepsilon^{-1}$.

\indent Let us fix $q\in\mathopen{(}2,2^*\mathclose{)}$. Since the function $$t\mapsto \frac{|f(t)|}{|t|^{q-1}}$$ is bounded on $[\delta_\varepsilon, \delta_\varepsilon^{-1}]$, for some $m_\varepsilon>0$ and for every $t\in \R$ one has
\begin{equation}\label{stimavaloreassolutof}
|f(t)| \leq \frac{\varepsilon}{\left\|\beta\right\|_{L^{\infty}(\Omega)}}|t| + m_\varepsilon |t|^{q-1}.
\end{equation}
\indent As a byproduct, inequality \eqref{stimavaloreassolutof}, in addition to \eqref{constants}, yields
\begin{align*}
|\Psi(w)| & \leq \int_\Omega \beta(x)|F(\Tr(w)(x))| dx\\
          & \leq \int_\Omega \beta(x) \left(\frac{\varepsilon}{2\left\|\beta\right\|_{L^{\infty}(\Omega)}} |\Tr(w)(x)|^2 + \frac{m_\varepsilon}{q} |\Tr(w)(x)|^q\right)dx\\
          & \leq \int_\Omega \left(\frac{\varepsilon}{2}|\Tr(w)(x)|^2 + \frac{m_\varepsilon}{q}\beta(x) |\Tr(w)(x)|^q\right)dx\\
					& \leq \frac{\varepsilon}{2} \left\|\Tr(w)\right\|_{L^{2}(\Omega)}^2 + \frac{m_\varepsilon}{q}\left\|\beta\right\|_{L^{\infty}(\Omega)} \left\|\Tr(w)\right\|_{L^{q}(\Omega)}^q\\
& \leq \frac{\varepsilon}{2} c_2^2\left\|w\right\|^2_{X^{1/2}_0(\mathcal{C}_\Omega)} + \frac{m_\varepsilon}{q}c_q^q \left\|\beta\right\|_{L^{\infty}(\Omega)} \left\|w\right\|^q_{X^{1/2}_0(\mathcal{C}_\Omega)},
\end{align*}
for every $w\in {X^{1/2}_0(\mathcal{C}_\Omega)}$.\par
\indent Therefore, it follows that for every $w\in X^{1/2}_0(\mathcal{C}_\Omega)\setminus\{0\}$,
$$
0 \leq \frac{|\Psi(w)|}{\left\|w\right\|^2_{X^{1/2}_0(\mathcal{C}_\Omega)}} \leq \frac{\varepsilon}{2}c_2^2 + \frac{m_\varepsilon}{q}\left\|\beta\right\|_{L^{\infty}(\Omega)} c_q^q\left\|w\right\|^{q-2}_{X^{1/2}_0(\mathcal{C}_\Omega)}.
$$
Since $q>2$ and $\varepsilon$ is arbitrary, the first limit of \eqref{sottoquadracita} turns out to be zero.

\indent Now, if $r\in \mathopen{(}1,2\mathclose{)}$, due to the continuity of $f$, there also exists a number $M_\varepsilon >0$ such that
$$
\frac{|f(t)|}{|t|^{r-1}} \leq M_\varepsilon,
$$
for all $t\in[\delta_\varepsilon,\delta_\varepsilon^{-1}]$, where $\varepsilon$ and $\delta_\varepsilon$ are the previously introduced numbers.\par
 The above inequality, together with \eqref{stimaf}, yields
$$
|f(t)| \leq\frac{\varepsilon}{\left\|\beta\right\|_{L^{\infty}(\Omega)}} |t| + M_\varepsilon |t|^{r-1}
$$
for each $t\in \R$ and hence
\begin{align*}
|\Psi(w)| & \leq \int_\Omega \beta(x)|F(\Tr(w)(x))| dx\\
          & \leq \int_\Omega \beta(x) \left(\frac{\varepsilon}{2\left\|\beta\right\|_{L^{\infty}(\Omega)}} |\Tr(w)(x)|^2 + \frac{M_\varepsilon}{r} |\Tr(w)(x)|^r\right)dx\\
          & \leq \int_\Omega \left(\frac{\varepsilon}{2}|\Tr(w)(x)|^2 + \frac{M_\varepsilon}{r}\beta(x) |\Tr(w)(x)|^r\right)dx\\
					& \leq \frac{\varepsilon}{2} \left\|\Tr(w)\right\|_{L^{2}(\Omega)}^2 + \frac{M_\varepsilon}{r}\left\|\beta\right\|_{L^{\infty}(\Omega)} \left\|\Tr(w)\right\|_{L^{r}(\Omega)}^r\\
& \leq \frac{\varepsilon}{2}c_2^2 \left\|w\right\|^2_{X^{1/2}_0(\mathcal{C}_\Omega)} + \frac{M_\varepsilon}{r}\left\|\beta\right\|_{L^{\infty}(\Omega)} c_r^r\left\|w\right\|^r_{X^{1/2}_0(\mathcal{C}_\Omega)},
\end{align*}
for each $w\in X^{1/2}_0(\mathcal{C}_\Omega)$.\par
 Therefore, it follows that for every $w\in X^{1/2}_0(\mathcal{C}_\Omega)\setminus\{0\}$,
\begin{equation}\label{stimaadinfinito}
0 \leq \frac{|\Psi(w)|}{\left\|w\right\|^2_{X^{1/2}_0(\mathcal{C}_\Omega)}} \leq \frac{\varepsilon}{2}c_2^2 + \frac{M_\varepsilon}{r}\left\|\beta\right\|_{L^{\infty}(\Omega)} c_r^r\left\|w\right\|^{r-2}_{X^{1/2}_0(\mathcal{C}_\Omega)}.
\end{equation}

\noindent Since $\varepsilon$ can be chosen as small as we wish and $r\in \mathopen{(}1,2\mathclose{)}$, taking the limit for $\left\|w\right\|_{X^{1/2}_0(\mathcal{C}_\Omega)}\to+\infty$ in \eqref{stimaadinfinito}, we have proved the second limit of \eqref{sottoquadracita}.

\subsection{The Palais-Smale condition}
For the sake of completeness, we recall that, if $E$ is a real Banach space, a $C^1$-functional $J:E\to\R$ is said to satisfy the Palais-Smale condition at level $\mu\in\R$ when
\begin{itemize}
\item[$\textrm{(PS)}_{\mu}$] {\it Every sequence $\{z_{j}\}_{j\in \N} \subset E$  such that
$$
J(z_j)\to \mu \quad{\it and}\quad \|J'(z_j)\|_{E^*}\to 0,
$$
when $j\rightarrow +\infty$, possesses a convergent subsequence in $E$.}
\end{itemize}

Here $E^*$ denotes the topological dual of $E$. We say that $J$ satisfies the \textit{Palais-Smale condition} ($(\rm PS)$ in short) if $\textrm{(PS)}_{\mu}$ holds for every $\mu\in \R$.

\begin{lemma}\label{cps}
Let $f:\R \to \R$ be a continuous function satisfying conditions \eqref{supf} and \eqref{condinfinito}. Then for every $\lambda>0$, the functional $\mathcal J_{\lambda}$ is bounded from below, coercive and satisfies $(\rm PS)$.
\end{lemma}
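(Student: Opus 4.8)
The plan is to derive all three properties from the sub-quadratic estimate \eqref{stimaadinfinito} already established, together with the compactness of the trace operator. First I would obtain coercivity and boundedness from below in one stroke. Starting from $\mathcal J_{\lambda}(w)=\frac12\|w\|^2_{X^{1/2}_0(\mathcal{C}_\Omega)}-\lambda\Psi(w)$ and the bound behind \eqref{stimaadinfinito}, namely
$$
|\Psi(w)|\leq \frac{\varepsilon}{2}c_2^2\|w\|^2_{X^{1/2}_0(\mathcal{C}_\Omega)}+\frac{M_\varepsilon}{r}\|\beta\|_{L^{\infty}(\Omega)}c_r^r\|w\|^r_{X^{1/2}_0(\mathcal{C}_\Omega)},\qquad r\in(1,2),
$$
I would fix $\varepsilon>0$ small enough that $\lambda\varepsilon c_2^2/2\leq 1/4$, which yields
$$
\mathcal J_{\lambda}(w)\geq \frac14\|w\|^2_{X^{1/2}_0(\mathcal{C}_\Omega)}-\lambda\frac{M_\varepsilon}{r}\|\beta\|_{L^{\infty}(\Omega)}c_r^r\|w\|^r_{X^{1/2}_0(\mathcal{C}_\Omega)}.
$$
Since $r<2$, the quadratic term dominates, so $\mathcal J_{\lambda}(w)\to+\infty$ as $\|w\|_{X^{1/2}_0(\mathcal{C}_\Omega)}\to\infty$, giving coercivity; and because the real function $s\mapsto \frac14 s^2-Cs^r$ is continuous and coercive on $[0,+\infty)$, it attains a finite minimum, so $\mathcal J_{\lambda}$ is bounded from below.

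For the Palais--Smale condition, let $\{w_j\}_{j\in\NN}$ satisfy $\mathcal J_{\lambda}(w_j)\to\mu$ and $\|\mathcal J_{\lambda}'(w_j)\|_{(X^{1/2}_0(\mathcal{C}_\Omega))^{*}}\to 0$. By coercivity the sequence $\{w_j\}$ is bounded, since any subsequence with $\|w_j\|_{X^{1/2}_0(\mathcal{C}_\Omega)}\to\infty$ would force $\mathcal J_{\lambda}(w_j)\to+\infty$, contradicting convergence to $\mu$. As $X^{1/2}_0(\mathcal{C}_\Omega)$ is a Hilbert space, up to a subsequence $w_j\rightharpoonup w_\infty$ weakly; by the compactness of $\Tr$ into $L^{\nu}(\Omega)$ for $\nu\in[1,2^{\sharp})$, one has $\Tr(w_j)\to\Tr(w_\infty)$ strongly in $L^2(\Omega)$ and a.e. in $\Omega$.

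The decisive step is upgrading weak to strong convergence. I would test the derivative against $w_j-w_\infty$: since $\|\mathcal J_{\lambda}'(w_j)\|\to 0$ and $\{w_j-w_\infty\}$ is bounded, $\langle\mathcal J_{\lambda}'(w_j),w_j-w_\infty\rangle\to 0$, and writing this out gives
$$
\int_{\mathcal{C}_\Omega}\langle\nabla w_j,\nabla(w_j-w_\infty)\rangle\,dxdy=\langle\mathcal J_{\lambda}'(w_j),w_j-w_\infty\rangle+\lambda\int_{\Omega}\beta(x)f(\Tr(w_j))\Tr(w_j-w_\infty)\,dx.
$$
The nonlinear term is a compact perturbation: the growth bound \eqref{crescita} shows $\{f(\Tr(w_j))\}$ is bounded in $L^2(\Omega)$, while $\Tr(w_j)-\Tr(w_\infty)\to 0$ in $L^2(\Omega)$, so by the Cauchy--Schwarz inequality the integral tends to $0$. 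Combining this with $\int_{\mathcal{C}_\Omega}\langle\nabla w_\infty,\nabla(w_j-w_\infty)\rangle\,dxdy\to 0$, which follows from weak convergence, subtraction yields $\|w_j-w_\infty\|^2_{X^{1/2}_0(\mathcal{C}_\Omega)}\to 0$, the desired strong convergence.

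The main obstacle is exactly this final compactness argument, and it rests entirely on the compact embedding of the trace space into $L^2(\Omega)$; the remaining parts are routine manipulation of the estimate \eqref{stimaadinfinito}. I would take care only that the linear growth in \eqref{crescita} matches an exponent for which compactness holds, and here $L^2$ suffices since $2<2^{\sharp}$, so no borderline exponent difficulty arises.
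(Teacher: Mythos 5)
Your proof is correct, and its overall architecture coincides with the paper's: coercivity and lower boundedness from the sub-quadratic estimate \eqref{stimaadinfinito}, boundedness of a Palais--Smale sequence via coercivity, weak compactness, and then testing $\mathcal J_\lambda'(w_j)$ against $w_j-w_\infty$ with the trace compactness absorbing the nonlinear term. Within that scheme you make two simplifications that diverge from the paper's details. First, for the nonlinear term you use the plain linear growth bound \eqref{crescita}, boundedness of $\{f(\Tr(w_j))\}$ in $L^2(\Omega)$, and strong $L^2$-convergence of $\Tr(w_j-w_\infty)$ (legitimate, since $2<2^{\sharp}$), whereas the paper splits $|f(t)|\leq \frac{\varepsilon}{\|\beta\|_{L^\infty(\Omega)}}|t|+m_\varepsilon|t|^{q-1}$ with $q\in(2,2^{\sharp})$ and runs H\"older plus compactness in $L^q(\Omega)$; your version is shorter and avoids the ``$\varepsilon$ arbitrary'' bookkeeping. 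Second, at the end you exploit the Hilbert structure directly, writing $\|w_j-w_\infty\|^2_{X^{1/2}_0(\mathcal{C}_\Omega)}$ as the difference of the two pairings and concluding strong convergence outright; the paper instead deduces $\|w_j\|_{X^{1/2}_0(\mathcal{C}_\Omega)}\to\|w_\infty\|_{X^{1/2}_0(\mathcal{C}_\Omega)}$ and invokes the uniform-convexity criterion of \cite[Proposition III.30]{brezis} (weak convergence plus norm convergence implies strong convergence). Your route is more elementary and self-contained for this Hilbert setting; the paper's argument is the one that generalizes to non-Hilbertian (e.g. $W^{1,p}$-type) frameworks where no inner-product identity is available.
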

\begin{proof}

Fix $\lambda>0$ and $0<\varepsilon<1/\lambda c_{2}^{2}$.
Due to \eqref{stimaadinfinito}, one has
\begin{align*}
\mathcal J_{\lambda}(w) &\geq \frac{1}{2}\|w\|^2_{X^{1/2}_0(\mathcal{C}_\Omega)} - \lambda\int_\Omega \beta(x)|F(\Tr(w)(x))|dx \\
                        &\geq \frac{1}{2}\|w\|^2_{X^{1/2}_0(\mathcal{C}_\Omega)} - \lambda c_2^2\frac{\varepsilon}{2}\left\|w\right\|^2_{X^{1/2}_0(\mathcal{C}_\Omega)} -\lambda\frac{M_\varepsilon}{r}\left\|\beta\right\|_{L^{\infty}(\Omega)} c_r^r\left\|w\right\|^{r}_{X^{1/2}_0(\mathcal{C}_\Omega)}\\
                        &=\frac{1}{2}\left(1-{\lambda c_2^2}{\varepsilon}\right)\|w\|^2_{X^{1/2}_0(\mathcal{C}_\Omega)}-\lambda\frac{M_\varepsilon}{r}\left\|\beta\right\|_{L^{\infty}(\Omega)} c_r^r\left\|w\right\|^{r}_{X^{1/2}_0(\mathcal{C}_\Omega)},
\end{align*}
for every $w\in X^{1/2}_0(\mathcal{C}_\Omega)$. Then the functional $\mathcal J_{\lambda}$ is bounded from below and coercive.

Now, let us prove that $\mathcal J_{\lambda}$ satisfies $\textrm{(PS)}_{\mu}$ for $\mu\in\R$. To this end, let $\{w_{j}\}_{j\in \N}\subset X^{1/2}_0(\mathcal{C}_\Omega)$ be a Palais-Smale sequence, i.e.
$$
\mathcal J_{\lambda}(w_j)\to \mu \quad{\rm and}\quad \|\mathcal J'_{\lambda}(w_j)\|_{*}\to 0,
$$
as $j\rightarrow +\infty$ where, we set
$$\|\mathcal J'_{\lambda}(w_j)\|_{*}:=\sup\Big\{\big|\langle\,\mathcal J_{\lambda}'(w_j),\varphi \rangle
\big|: \; \varphi\in X^{1/2}_0(\mathcal{C}_\Omega),\,{\rm and}\, \|\varphi\|_{X^{1/2}_0(\mathcal{C}_\Omega)}=1\Big\}.$$
\indent Taking into account the coercivity of $\mathcal J_{\lambda}$, the sequence $\{w_{j}\}_{j\in \N}$ is necessarily bounded in $X^{1/2}_0(\mathcal{C}_\Omega)$. Since $X^{1/2}_0(\mathcal{C}_\Omega)$ is reflexive, we can extract a subsequence, which for simplicity we still denote $\{w_{j}\}_{j\in \N}$, such that
$w_{j}\rightharpoonup w_\infty$ in $X^{1/2}_0(\mathcal{C}_\Omega)$, i.e.,

\begin{equation}\label{conv0}
\int_{\mathcal{C}_\Omega}\langle \nabla w_j,\nabla \varphi \rangle dxdy\to
\int_{\mathcal{C}_\Omega}\langle \nabla w_{\infty},\nabla \varphi \rangle dxdy,
\end{equation}
as $j\to+\infty$, for any $\varphi\in X^{1/2}_0(\mathcal{C}_\Omega)$.

We will prove that $\{w_{j}\}_{j\in \N}$ strongly converges to $w_\infty\in X^{1/2}_0(\mathcal{C}_\Omega)$. One has
\begin{equation}\label{jj}
\langle \Phi'(w_{j}),w_{j}-w_\infty\rangle = \langle \mathcal J_{\lambda}'(w_j),w_j-w_\infty\rangle + \lambda\int_{\Omega}\beta(x)f(\Tr(w_j)(x))\Tr(w_j-w_\infty)(x)dx,
\end{equation}
where
\begin{eqnarray*}
\langle \Phi'(w_{j}),w_{j}-w_\infty\rangle  &=& \displaystyle\int_{\mathcal{C}_\Omega}|\nabla w_j(x,y)|^2\,dxdy\\
               &&\qquad\qquad- \int_{\mathcal{C}_\Omega}\langle \nabla w_j,\nabla w_\infty \rangle dxdy.
\end{eqnarray*}

Since $\|\mathcal J_{\lambda}'(w_j)\|_{*}\to 0$ and the sequence $\{w_j-w_\infty\}_{j\in \N}$ is bounded in $X^{1/2}_0(\mathcal{C}_\Omega)$, taking account of the fact that $|\langle\mathcal J_{\lambda}'(w_j),w_j-w_\infty\rangle|\leq\|\mathcal J_{\lambda}'(w_j)\|_{*}\|w_j-w_\infty\|_{X^{1/2}_0(\mathcal{C}_\Omega)}$, one has
\begin{eqnarray}\label{j2}
\langle \mathcal J_{\lambda}'(w_j),w_j-w_\infty\rangle\to 0
\end{eqnarray}
as $j\to+\infty$.

Next, setting
$$
I:=\int_{\Omega}\beta(x)|f(\Tr(w_j)(x))||\Tr(w_j-w_\infty)(x)|dx,
$$
 one has by \eqref{stimavaloreassolutof} and H\"{o}lder's inequality
\begin{align*}
 I &\leq \varepsilon\int_\Omega |\Tr(w_j)(x)||\Tr(w_j-w_\infty)(x)|dx \\
                                                & \quad+ m_\varepsilon\left\|\beta\right\|_{L^{\infty}(\Omega)} \int_\Omega |\Tr(w_j)(x)|^{q-1}|\Tr(w_j-w_\infty)(x)|dx \\
                                                &\leq \varepsilon\left\|\Tr(w_j)\right\|_{L^{2}(\Omega)} \left\|\Tr(w_j-w_\infty)\right\|_{L^{2}(\Omega)}\\ & \quad+ m_\varepsilon \left\|\beta\right\|_{L^{\infty}(\Omega)} \left\|\Tr(w_j)\right\|_{L^{q}(\Omega)}^{q-1}\left\|\Tr(w_j-w_\infty)\right\|_{L^{q}(\Omega)}.
\end{align*}

Since $\varepsilon$ is arbitrary and the embedding $\Tr(X^{1/2}_0(\mathcal{C}_\Omega))\hookrightarrow L^q(\Omega)$  is compact, we obtain
\indent \begin{eqnarray}\label{j3}
I=\int_{\Omega}\beta(x)|f(\Tr(w_j)(x))||\Tr(w_j-w_\infty)(x)|dx\to 0,
\end{eqnarray}
as $j\rightarrow +\infty$.

Relations \eqref{jj}, \eqref{j2} and \eqref{j3} yield
\begin{eqnarray}\label{fin}
\langle \Phi'(w_{j}),w_{j}-w_\infty\rangle
\rightarrow 0,
\end{eqnarray}
as $j\rightarrow +\infty$ and hence
\begin{equation}\label{fin2}
\displaystyle\int_{\mathcal{C}_\Omega}|\nabla w_j(x,y)|^2\,dxdy
- \int_{\mathcal{C}_\Omega}\langle \nabla w_j,\nabla w_\infty \rangle dxdy\rightarrow 0,
\end{equation}
as $j\rightarrow +\infty$.

Thus, it follows by \eqref{fin2} and \eqref{conv0} that
$$
\lim_{j\rightarrow +\infty}\displaystyle\int_{\mathcal{C}_\Omega}|\nabla w_j(x,y)|^2\,dxdy = \displaystyle\int_{\mathcal{C}_\Omega}|\nabla w_\infty(x,y)|^2\,dxdy.
$$

In conclusion, thanks to \cite[Proposition III.30]{brezis}, $w_j\rightarrow w_\infty$ in $X^{1/2}_0(\mathcal{C}_\Omega)$ and the proof is complete.
\end{proof}

The following technical lemma will be useful in the proof of our result via minimization procedure.

\begin{lemma}\label{lemma1}
Let $f:\RR\to \RR$ be a continuous function satisfying condition~\eqref{Segno}. Then there exists $\widetilde{w}\in X^{1/2}_0(\mathcal{C}_\Omega)$ such that
$
\Psi(\widetilde{w})>0.
$
\end{lemma}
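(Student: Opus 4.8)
The plan is to exploit the sign condition \eqref{Segno} together with the strict positivity \eqref{proprietabeta} of $\beta$ by constructing an explicit competitor whose trace realizes, on a large portion of $\Omega$, a value at which the primitive $F$ is positive. First I would invoke \eqref{Segno} to fix some $\bar t\in\R$ with $F(\bar t)>0$; without loss of generality $\bar t>0$ (otherwise one works with the interval $[\bar t,0]$). Since $f$ is continuous, $F$ is continuous, so the number $M:=\max_{s\in[0,\bar t]}|F(s)|$ is finite, and of course $F(0)=0$.

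Next I would build the trial function. I would pick two concentric balls $\Omega_1\subset\subset\Omega_2\subset\subset\Omega$ and a Lipschitz function $u:\Omega\to\R$ with $0\le u\le\bar t$, satisfying $u\equiv\bar t$ on $\Omega_1$ and $u\equiv 0$ on $\Omega\setminus\Omega_2$. Such a $u$ lies in $H_0^1(\Omega)$, hence in $H^{1/2}_0(\Omega)$ (the inclusion $H_0^1(\Omega)\subset H^{1/2}_0(\Omega)$ is immediate from the spectral definition, since $\lambda_j^{1/2}\le\lambda_j$ for all but finitely many $j$). Therefore $u=\Tr(\widetilde w)$ with $\widetilde w:=\E(u)\in X^{1/2}_0(\mathcal{C}_\Omega)$, which is my candidate, and by definition $\Psi(\widetilde w)=\int_\Omega\beta(x)F(u(x))\,dx$.

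I would then estimate $\Psi(\widetilde w)$ by decomposing $\Omega$ into the plateau $\Omega_1$, the transition annulus $\Omega_2\setminus\Omega_1$, and the exterior $\Omega\setminus\Omega_2$. On $\Omega_1$ one has $F(u)=F(\bar t)$ and $\beta\ge\essinf_{x\in\Omega}\beta(x)=:\beta_0>0$; on $\Omega\setminus\Omega_2$ one has $u\equiv 0$, so $F(u)=0$; on the annulus $|F(u)|\le M$ and $\beta\le\|\beta\|_{L^{\infty}(\Omega)}$. This gives
\[
\Psi(\widetilde w)\ge \beta_0\,F(\bar t)\,|\Omega_1|-\|\beta\|_{L^{\infty}(\Omega)}\,M\,|\Omega_2\setminus\Omega_1|.
\]

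The only delicate point is to make this lower bound strictly positive, and it is exactly the transition annulus that must be controlled, since there $F(u)$ may well be negative. This is overcome purely by a measure argument rather than by any pointwise sign information on $F$: the quantity $\beta_0 F(\bar t)>0$ is fixed, while the geometry of the balls is free, so keeping $\Omega_1$ fixed and letting $\Omega_2$ shrink toward it drives $|\Omega_2\setminus\Omega_1|\to 0$ with $|\Omega_1|$ bounded below. Choosing the annulus thin enough (precisely, $|\Omega_2\setminus\Omega_1|<\beta_0 F(\bar t)|\Omega_1|/(\|\beta\|_{L^{\infty}(\Omega)}M+1)$) makes the negative term negligible compared with the positive plateau contribution, yielding $\Psi(\widetilde w)>0$ and completing the proof.
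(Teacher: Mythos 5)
Your proof is correct and follows essentially the same route as the paper: both constructions use a plateau function equal to $\bar t$ on an inner ball and vanishing outside a slightly larger concentric ball, bound the contribution of the transition annulus by $\|\beta\|_{L^{\infty}(\Omega)}\max|F|$ times its measure, and make the annulus thin enough (you shrink the outer ball; the paper equivalently pushes the inner radius parameter $\sigma_0\to 1$) so that the positive plateau term dominates. Your additional justification that the trial trace lies in $H_0^{1/2}(\Omega)$ via $H_0^1(\Omega)\subset H_0^{1/2}(\Omega)$ is a fine elaboration of a point the paper leaves implicit.
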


\begin{proof}
Fix a point $x_0\in \Omega$ and choose $\tau>0$ in such a way that
 $$
 B(x_0,\tau):=\{x\in\RR^n:|x-x_0|< \tau\}\subseteq \Omega,
 $$
 where $|\cdot|$ denotes the usual Euclidean norm in $\RR^n$.
 By condition~\eqref{Segno}
 \begin{equation}\label{condF}
\mbox{there exists}\,\,\bar t\in \RR\,\, \mbox{such that}\,\, F(\bar t)>0\,.
\end{equation}
 \indent Hence, let $\bar t\in \RR$ be as in condition~\eqref{condF} and fix $\sigma_0\in(0,1)$ for which
\begin{equation}\label{R1}
F(\bar t)\sigma^n_0\essinf_{x\in \Omega}\beta(x)-(1-\sigma^n_0)\max_{|t|\leq |\bar t|}|F(t)|\|\beta\|_{L^{\infty}(\Omega)}>0.
\end{equation}
Note that this choice is admissible thanks to assumption~\eqref{condF}. Let $\widetilde{u}\in C_0^{1}(\Omega)$ be such that
\[
 \widetilde{u}(x):= \left\{
\begin{array}{ll}
0 & \mbox{ if $x \in \bar\Omega \setminus B(x_0,\tau)$} \\\\
\bar t & \mbox{ if $x \in B(x_0,\sigma_0{\tau}),$}
\end{array}
\right.
\]
and
$|\widetilde{u}(x)|\leq |\bar t|$ if $x \in B(x_0,\tau) \setminus B(x_0,\,\sigma_0\tau)$.\par

 \indent Furthermore, let $\widetilde{w}\in X^{1/2}_0(\mathcal{C}_\Omega)$ be such that $\Tr(\widetilde{w})=\widetilde{u}$. We claim that
 \begin{equation}\label{R2}
 \begin{aligned}
 \int_{\Omega} \beta(x)F(\widetilde{u}(x))\,dx & \geq \Big(F(\bar t)\sigma^n_0\essinf_{x\in \Omega}\beta(x)\\
 & \qquad -(1-\sigma^n_0)\max_{|t|\leq |\bar t|}|F(t)|\|\beta\|_{L^{\infty}(\Omega)}\Big)\omega_n\tau^n,
 \end{aligned}
\end{equation}
where
$$
\omega_n:=\frac{\pi^{n/2}}{\displaystyle\Gamma\left(1+\frac{n}{2}\right)},
$$
denotes the measure of the unit ball in the Euclidean space $\R^n$, with
$$
\Gamma(s):=\int_0^{+\infty}z^{s-1}e^{-z}dz, \quad\forall s>0.
$$
\indent For this purpose, first of all, note that
\begin{equation}\label{ubarlimitata}
|\widetilde{u}(x)| \leq |\bar t|\,\,\,\, \mbox{in}\,\,\, \Omega\,.
\end{equation}
Moreover, by the construction of $\widetilde{u}$, \eqref{ubarlimitata} and the fact that $F(0)=0$, it follows that
\begin{equation}\label{e3.2a25551}
\begin{aligned}
\int_{B(x_0,\tau) \setminus B(x_0,\,\sigma_0\tau)} \beta(x)F(\widetilde{u}(x))\,dx & \geq -\int_{B(x_0,\tau) \setminus B(x_0,\,\sigma_0\tau)} \beta(x)|F(\widetilde{u}(x))|\,dx\\
 & \geq -\|\beta\|_{L^{\infty}(\Omega)}\max_{|t|\leq |\bar t|} |F(t)|\int_{B(x_0,\tau) \setminus B(x_0,\,\sigma_0\tau)} \,dx\\
 & = -\displaystyle\|\beta\|_{L^{\infty}(\Omega)}\max_{|t|\leq |\bar t|}|F(t)|(1-\sigma^n_0)\tau^n\omega_n
 \end{aligned}
\end{equation}
and
\begin{equation}\label{add22}
\int_{\Omega \setminus B(x_0,\tau)} \beta(x)F(\widetilde{u}(x))\,dx =0\,.
\end{equation}
Consequently, relations \eqref{e3.2a25551} and \eqref{add22} and again the definition of $\widetilde{u}$ yield
\begin{equation*}\label{add33}
\begin{aligned}
&\int_\Omega \beta(x)F(\widetilde{u}(x))\,dx\\& = \int_{B(x_0,\,\sigma_0\tau)} \beta(x)F(\widetilde{u}(x))\,dx+\int_{B(x_0,\tau) \setminus B(x_0,\,\sigma_0\tau)} \beta(x)F(\widetilde{u}(x))\,dx\\
& = \int_{B(x_0,\,\sigma_0\tau)} \beta(x)F(\bar t)\,dx+\int_{B(x_0,\tau) \setminus B(x_0,\,\sigma_0\tau)}\beta(x) F( \widetilde{u}(x))\,dx\\
& \geq  F(\bar t)\sigma^n_0\tau^n\omega_n\essinf_{x\in \Omega}\beta(x)-\max_{|t|\leq |\bar t|}|F(t)|(1-\sigma^n_0)\tau^n\omega_n\|\beta\|_{L^{\infty}(\Omega)}\\
& = \left(F(\bar t)\sigma^n_0\essinf_{x\in \Omega}\beta(x)-(1-\sigma^n_0)\max_{|t|\leq |\bar t|}|F(t)|\|\beta\|_{L^{\infty}(\Omega)}\right)\omega_n\tau^n\\
& >0.
\end{aligned}
\end{equation*}
thanks to \eqref{R1}. Clearly, this completes the proof of Lemma~\ref{lemma1}.
\end{proof}

Now, let us prove item $(ii)$ of Theorem \ref{generalkernel0f}.
 
\subsection{First solution via direct minimization}
The assumptions on $\Omega,$ $\beta$ and \eqref{Segno} imply that there exists a suitable smooth function $\widetilde{w}\in X^{1/2}_0(\mathcal{C}_\Omega)\setminus\{0\}$ such that $\Psi(\widetilde{w})>0$, where $\widetilde{u}=\Tr(\widetilde{w})\in H_0^{1/2}(\Omega)$ (see Lemma \ref{lemma1}), and thus the number
\begin{equation}\label{hatlambda}
\lambda^\star:= \inf_{\underset{w\in X^{1/2}_0(\mathcal{C}_\Omega)}{\Psi(w)>0}}\frac{\Phi(w)}{\Psi(w)}
\end{equation}
is well-defined and, in the light of \eqref{sottoquadracita}, positive and finite.\par
 Fixing $\lambda>\lambda^\star$ and choosing $w_\lambda^\star\in X_0$ with $\Psi(w_\lambda^\star)>0$ and
$$
\lambda^\star \leq \frac{\Phi(w_\lambda^\star)}{\Psi(w_\lambda^\star)}<\lambda,
$$
one has
$$
c_{1,\lambda}:= \inf_{w\in X^{1/2}_0(\mathcal{C}_\Omega)} \mathcal J_\lambda(w) \leq \mathcal J_\lambda(w_\lambda^\star)<0.
$$
\indent Since $\mathcal J_\lambda$ is bounded from below and satisfies $(\rm PS)_{c_{1,\lambda}}$, it follows that  $c_{1,\lambda}$ is a critical value of $\mathcal J_\lambda$, to wit, there exists $w_{1,\lambda}\in X^{1/2}_0(\mathcal{C}_\Omega)\setminus\{0\}$ such that
$$
\mathcal J_\lambda(w_{1,\lambda})=c_{1,\lambda} <0 \quad \mbox{ and } \quad \mathcal J'_\lambda(w_{1,\lambda})=0.
$$
This is the first solution we have been searching for.
 
\subsection{Second solution via MPT}
The non-local analysis that we perform in this paper
in order to use the
Mountain Pass Theorem is quite general and may be suitable for other goals,
too.
Our proof will check that the classical geometry of the Mountain Pass Theorem
is respected by the non-local framework.
Fix $\lambda>\lambda^\star$, $\lambda^\star$ defined in \eqref{hatlambda}, and apply \eqref{stimavaloreassolutof} with $\varepsilon:=1/(2\lambda c_2^2)$. For each $w\in X^{1/2}_0(\mathcal{C}_\Omega)$ one has
\begin{align*}
\mathcal J_\lambda(w) & = \frac{1}{2}\left\|w\right\|^2_{X^{1/2}_0(\mathcal{C}_\Omega)} -\lambda\Psi(w)\\
                   & \geq \frac{1}{2}\left\|w\right\|^2_{X^{1/2}_0(\mathcal{C}_\Omega)} - \frac{\lambda}{2}\varepsilon\left\|\Tr(w)\right\|_2^2 - \frac{\lambda}{q}\left\|\beta\right\|_{L^{\infty}(\Omega)} m_\lambda\left\|\Tr(w)\right\|_{L^{q}(\Omega)}^q\\
									 & \geq \frac{1-\lambda\varepsilon c_2^2}{2} \left\|w\right\|^2_{ X^{1/2}_0(\mathcal{C}_\Omega)} - \frac{\lambda}{q}\left\|\beta\right\|_{L^{\infty}(\Omega)} m_\lambda c_q^q\left\|w\right\|^q_{ X^{1/2}_0(\mathcal{C}_\Omega)}.
\end{align*}
Setting
$$
r_\lambda:= \min\left\{\left\|w_\lambda^\star\right\|_{X^{1/2}_0(\mathcal{C}_\Omega)}, \left(\frac{q}{4\lambda\left\|\beta\right\|_{L^{\infty}(\Omega)} m_\lambda c_q^q}\right)^{1/(q-2)}\right\},
$$
due to what has been seen before one has
$$
\inf_{\left\|w\right\|_{X^{1/2}_0(\mathcal{C}_\Omega)}=r_\lambda}\mathcal J_\lambda (w) > 0 = \mathcal J_\lambda(0) > \mathcal J_\lambda(w_\lambda^\star),
$$
namely the energy functional possesses the usual mountain pass geometry.\par
 Therefore, invoking also Lemma \ref{cps}, we can apply the Mountain Pass Theorem to deduce the existence of $w_{2,\lambda}\in X_0$ so that $\mathcal J'_\lambda (w_{2,\lambda})=0$ and $\mathcal J_\lambda (w_{2,\lambda})=c_{2,\lambda}$, where $c_{2,\lambda}$ has the well-known characterization:
$$
c_{2,\lambda}:=\inf_{\gamma\in\Gamma}\max_{t\in[0,1]}\mathcal J_\lambda (\gamma(t)),
$$
where
$$
\Gamma:=\left\{\gamma\in C^0([0,1];{X^{1/2}_0(\mathcal{C}_\Omega)}): \gamma(0)=0, \gamma(1)=w_\lambda^\star\right\}.
$$
\indent Since $$c_{2,\lambda}\geq \inf_{\left\|w\right\|_{X^{1/2}_0(\mathcal{C}_\Omega)}=r_\lambda}\mathcal J_\lambda (w)>0,$$ we have $0\neq w_{2,\lambda} \neq w_{1,\lambda}$ and the existence of two distinct non-trivial weak solutions to \eqref{2} is proved. In conclusion, $\Tr(w_{2,\lambda})$ and $\Tr(w_{1,\lambda})$ are two distinct non-trivial weak solutions to \eqref{problema}.\par
 Furthermore, by \cite[Theorem 5.2]{cabretan}, since \eqref{crescita} holds in addition to $\beta\in L^{\infty}(\Omega)$, it follows that $u_{i,\lambda}:=\Tr(w_{i,\lambda})\in L^{\infty}(\Omega)$, with $i\in\{1,2\}$.
The proof is now complete.

\begin{remark}\label{bound}
\rm{The proof of Theorem \ref{generalkernel0f} gives an exact, but quite involved form of the parameter $\lambda^{\star}$. In particular, we notice that
\begin{equation}\label{hatlambda2}
\lambda^\star:= \inf_{\underset{w\in X^{1/2}_0(\mathcal{C}_\Omega)}{\Psi(w)>0}}\frac{\Phi(w)}{\Psi(w)}\geq \frac{\lambda_1^{1/2}}{c_f\left\|\beta\right\|_{L^{\infty}(\Omega)}}.
\end{equation}
Indeed, by \eqref{c6}, one clearly has
$$
|F(t)|\leq \frac{c_f}{2}|t|^{2},\quad \forall\,t\in \R.
$$
Moreover, since
$$
\|\Tr(w)\|_{L^{2}(\Omega)}^2\leq \frac{1}{\lambda_1^{1/2}}\|w\|^2_{X^{1/2}_0(\mathcal{C}_\Omega)},\quad\forall\, w\in X^{1/2}_0(\mathcal{C}_\Omega)
$$
it follows that
\begin{align*}
\Psi(w) & \leq \int_\Omega \beta(x)|F(\Tr(w)(x))|dx\\
                   & \leq c_f\frac{\|\beta\|_{L^{\infty}(\Omega)}}{2} \|\Tr(w)\|_{L^{2}(\Omega)}^2 \\
                   & \leq c_f\frac{\|\beta\|_{L^{\infty}(\Omega)}}{2\lambda_1^{1/2}}\|w\|^2_{X^{1/2}_0(\mathcal{C}_\Omega)},
\end{align*}
for every $w\in X^{1/2}_0(\mathcal{C}_\Omega)$. Hence, inequality \eqref{hatlambda2} immediately holds. We point out that no information is available concerning the number of solutions of problem \eqref{problema} if
$$
\lambda\in\left[\frac{\lambda_1^{1/2}}{c_f\left\|\beta\right\|_{L^{\infty}(\Omega)}},\lambda^{\star}\right].
$$
\indent Since the expression of $\lambda^{\star}$
is quite involved, we give in the sequel an upper estimate of it which can be easily
calculated. This fact can be done in terms of the same analytical and geometrical constants.
To this end we fix an element $x_0\in \Omega$ and choose $\tau>0$ in such a way that
\begin{equation}\label{tiau}
{B}(x_0,\tau):=\{x\in\R^n:|x-x_0|< \tau\}\subseteq\Omega.
\end{equation}
Now, let $\sigma\in \mathopen{(}0,1\mathclose{)}$, $t\in \R$ and define $\omega_{\sigma}^{t}:\Omega \to \R$ as follows:
\begin{equation*}\label{deftruncfunc}
\omega_\sigma^{t}(x):=
\left\{
\begin{array}{ll}
0 & \mbox{ if $x \in \bar\Omega \setminus B(x_0,\tau)$} \\
\displaystyle\frac{t}{(1-\sigma)\tau} \left(\tau- |x-x_0| \right)
& \mbox{ if $x \in B(x_0,\tau) \setminus B(x_0,\sigma\tau)$} \\
t & \mbox{ if $x \in B(x_0,\sigma{\tau}).$}
\end{array}
\right.
\end{equation*}
\indent It is easily seen that
\begin{equation}\label{norma}
\begin{aligned}
  \int_\Omega |\nabla \omega^{t}_\sigma(x)|^2\,dx & = \int_{B(x_0,\tau)\setminus B(x_0,\sigma\tau)}\frac{t^2}{(1-\sigma)^2\tau^2}dx \\
                                & = \frac{t^2}{(1-\sigma)^2\tau^2} (|B(x_0,\tau)|-|B(x_0,\sigma\tau)|)\\
                                & = \frac{t^2 \omega_n\tau^{n-2}(1-\sigma^n)}{(1-\sigma)^2}.
\end{aligned}
\end{equation}
\indent Let
$$
w_{\sigma}^{t}(x,y):=e^{-\frac{y}{2}}\omega_\sigma^{t}(x),\quad\, \forall (x,y)\in \mathcal{C}_\Omega.
$$
Clearly, $w_{\sigma}^{t}\in X_0^{1/2}(\mathcal{C}_\Omega)$ and, since
$$
|\nabla w_{\sigma}^{t}(x,y)|^2=e^{-y}|\nabla \omega_{\sigma}^{t}(x)|^2+\frac{1}{4}e^{-y}|\omega_{\sigma}^{t}(x)|^2,\quad\, \forall (x,y)\in \mathcal{C}_\Omega
$$
it follows that
\begin{equation}\label{norma2}
\begin{aligned}
\left\|w_{\sigma}^{t}\right\|^2_{X^{1/2}_0(\mathcal{C}_\Omega)} & := \int_{\mathcal{C}_\Omega}|\nabla w_{\sigma}^{t}(x,y)|^2\,dxdy\\
                   & = \int_{\mathcal{C}_\Omega}e^{-y}|\nabla \omega^{t}_\sigma(x)|^2\,dxdy+\frac{1}{4} \int_{\mathcal{C}_\Omega}e^{-y}|\omega^{t}_\sigma(x)|^2\,dxdy\\
                   & =\int_0^{+\infty}e^{-y}dy \left(\int_{\Omega}|\nabla \omega^{t}_\sigma(x)|^2\,dx+\frac{1}{4} \int_{\Omega}|\omega^{t}_\sigma(x)|^2\,dx\right)\\
                   & \leq \int_\Omega |\nabla\omega^{t}_\sigma(x)|^2\,dx+\frac{t^2}{4}|\Omega|.\\
\end{aligned}
\end{equation}
\indent Thus inequalities \eqref{norma} and \eqref{norma2} yield
\begin{equation}\label{norma3}
\left\|w_{\sigma}^{t}\right\|^2_{X^{1/2}_0(\mathcal{C}_\Omega)}\leq \left(\frac{\omega_n\tau^{n-2}(1-\sigma^n)}{(1-\sigma)^2}+\frac{|\Omega|}{4}\right)t^2.
\end{equation}
Moreover, arguing as in Lemma \ref{lemma1}, we have that there exist $t_0\in \R$ and $\sigma_0\in\mathopen{(}0,1\mathclose{)}$ such that
\begin{equation}\label{R24}
\begin{aligned}
\int_{\Omega} \beta(x)F(\Tr(w_{\sigma_0}^{t_0})(x))\,dx & \geq \Bigg(F(t_0)\sigma^n_0\essinf_{x\in \Omega}\beta(x)\\
& \qquad\qquad- (1-\sigma^n_0)\max_{|t|\leq |t_0|}|F(t)|\|\beta\|_{L^{\infty}(\Omega)}\Bigg)\omega_n\tau^n,
\end{aligned}
\end{equation}
with
$$
F(t_0)\sigma^n_0\essinf_{x\in \Omega}\beta(x)-(1-\sigma^n_0)\max_{|t|\leq |t_0|}|F(t)|\|\beta\|_{L^{\infty}(\Omega)}>0.
$$
\indent Due to \eqref{hatlambda} one has
$$
\lambda^\star \leq \frac{\Phi(w_{\sigma_0}^{t_0})}{\Psi(w_{\sigma_0}^{t_0})}.
$$
More precisely, inequalities \eqref{norma3} and \eqref{R24} yield $\lambda^{\star}\leq \lambda_0$, where
$$
\lambda_{0}:=\frac{t_0^2\displaystyle\left(\frac{\omega_n\tau^{n-2}(1-\sigma^n_0)}{(1-\sigma_0)^2}+\frac{|\Omega|}{4}\right)}{2\omega_n\tau^n\left(F(t_0)\sigma^n_0\displaystyle\essinf_{x\in \Omega}\beta(x)-(1-\sigma^n_0)\max_{|t|\leq |t_0|}|F(t)|\|\beta\|_{L^{\infty}(\Omega)}\right)}.
$$
Thus the conclusions of Theorem \ref{generalkernel0f} are valid for every $\lambda>\lambda_0$.
}
\end{remark}
\noindent\textit{Proof of Theorem} \ref{boccia}.
For any $t\in\R$, set
$$
F_+(t):=\int_0^t  f_+(z)dz,
$$
with
$$
f_+(z):=\left\{
\begin{array}{ll}
f(z)  & \mbox{ if } z\geq 0\\
0 & \mbox{ if } z<0.
\end{array}
\right.
$$
and define in a natural way $\mathcal J_\lambda^+: X^{1/2}_0(\mathcal{C}_\Omega)\to\R$ to be
$$
\mathcal J_\lambda^+(w):=\Phi(w)-\lambda\Psi_+(w),
$$
for any $u\in X^{1/2}_0(\mathcal{C}_\Omega)$, with
$$
\Psi_+(w):= \int_{\Omega} \beta(x) F_+(\Tr (w)(x)) dx.
$$

It is easy to see that the functional $\Psi_+$ is well-defined and Fr\'echet differentiable at any $u\in X^{1/2}_0(\mathcal{C}_\Omega)$ (being $F_+$ differentiable in $\R$) and that Theorem \ref{generalkernel0f} holds replacing $f$ by $f_+$. As a result (by using the Strong Maximum Principle \cite[Remark 4.2]{cs}) there exist two (positive) distinct critical points of $\mathcal J_\lambda^+$. Now, set
$$
S:=\{t>0:F(t)>0\}\,\,\,{\rm and}\,\,\,z_n(\sigma):=\frac{1-\sigma^n}{(2\sigma^n-1)(1-\sigma)^2},\,\,\,\forall\sigma\in \Sigma_n:=\left(\frac{1}{2^{1/n}},1\right).
$$
By hypotheses \eqref{supf}--\eqref{Segno} it follows that there exists $t_0>0$ such that
\begin{equation}\label{lambda12}
\frac{t^2_0}{F(t_0)}=\min_{t\in S}\frac{t^2}{F(t)}>0.
\end{equation}
\indent On the other hand, bearing in mind that $f$ is non-negative, owing to
$$
\lim_{\sigma\rightarrow \frac{1}{2^{1/n}}^+} z_n(\sigma)=\lim_{\sigma\rightarrow 1^-} z_n(\sigma)=+\infty,
$$
there exists $\sigma_0\in \Sigma_n$ such that
\begin{equation}\label{lambda1234}
F(t_0)(2\sigma_0^n-1)=\left(F(t_0)\sigma^n_0-(1-\sigma^n_0)\max_{|t|\leq t_0}F(t)\right)>0.
\end{equation}
\indent  Then by Remark \ref{bound}, inequalities \eqref{lambda12} and \eqref{lambda1234} ensure that for every
\begin{equation}\label{lambda}
\lambda>\frac{1}{2}\left(\frac{1}{r^2}\min_{\sigma\in \Sigma_n}z_n(\sigma)+\frac{1}{4}\right)\min_{t\in S}\frac{t^2}{F(t)},
\end{equation}
the following nonlocal problem
\begin{equation*}\label{problema241}
\left\{
\begin{array}{ll}
A_{1/2}u=\lambda f(u) & \mbox{\rm in } \Gamma^0_r\\
u> 0 & \mbox{\rm on } \Gamma^0_r\\
u=0 & \mbox{\rm on } \partial\Gamma^0_r,
\end{array}\right.
\end{equation*}
admits at least two distinct and nontrivial weak solutions $u_{1,\lambda}, u_{2,\lambda}\in L^{\infty}(\Gamma^0_r) \cap H_0^{1/2}(\Gamma^0_r)$.\par
Since condition \eqref{disball} holds, inequality \eqref{lambda} is satisfied for $\lambda=1$. Hence, problem \eqref{problema24} admits at least two distinct $L^{\infty}$-bounded weak solutions.\qed

\smallskip
\indent In conclusion, we present a direct application of our main result.

\begin{example}\label{esempio}
Let $\Omega$ be an open bounded set of
$\R^n$ ($n\geq 2$) with Lipschitz boundary $\partial\Omega$.
As a model for $f$ we can take the nonlinearity
$$
f(t):=\log(1+t^2),\,\quad\forall\,t\in\R.
$$
Indeed, the real function $f$ fulfills hypotheses \eqref{supf}--\eqref{Segno}. Hence, Theorem \ref{generalkernel0f} and Remark \ref{bound} ensure that for every
$$
\lambda>\frac{1}{2\omega_n\tau^n}\left(\omega_n\tau^{n-2}\min_{\sigma\in \Sigma_n}z_n(\sigma)+\frac{|\Omega|}{4}\right)\min_{t>0}\left(\frac{t^2}{2\arctan t+t\log(1+t^2)-2t}\right),
$$
the nonlocal problem
\begin{equation*}\label{problema2}
\left\{
\begin{array}{ll}
A_{1/2}u=\lambda \log(1+u^2) & \mbox{ in } \Omega\\
u=0 & \mbox{ on } \partial\Omega,
\end{array}\right.
\end{equation*}
admits at least two distinct weak solutions $u_{1,\lambda}, u_{2,\lambda}\in L^{\infty}(\Omega) \cap H_0^{1/2}(\Omega)\setminus\{0\}$.
\end{example}

\begin{remark}\rm{We conclude by recalling that a similar variational approach as we have employed has been extensively
used in several contexts, in order to prove multiplicity results of different
problems, such as elliptic problems on either bounded or unbounded domains of the Euclidean space (see \cite{k0,k2,k3,k5}), elliptic equations involving the Laplace-Beltrami operator on Riemannian manifold (see \cite{k1}), and, more recently, elliptic equations on the ball endowed with Funk-type metrics \cite{k4}. See also \cite{MRadu1}, where
a multiplicity result analogous to the one proved in the present paper
is considered when the underlying operator is the nonlocal one studied in \cite{BaMo, MoReSe,svmountain}.}
\end{remark}

\section*{Acknowledgments}
 This research was done under the auspices of the INdAM - GNAMPA Project 2016 entitled: {\it Problemi variazionali su variet\`a Riemanniane e gruppi di Carnot}, of the INdAM - GNAMPA Project 2017 titled: {\it Teoria e Modelli non-locali}, and the SRA grants P1-0292, J1-7025, J1-8131, and N1-0064.
 The first and the second author was partly supported
by the Italian MIUR project
{\em Variational methods, with applications to problems in mathematical
physics and geometry} (2015KB9WPT\_009).
The authors warmly thank the anonymous referees for their useful and nice
comments on the paper.

\end{document}